\def\BibTeX{{\rm B\kern-.05em{\sc i\kern-.025em b}\kern-.08em
    T\kern-.1667em\lower.7ex\hbox{E}\kern-.125emX}}
\newtheorem{proposition}{Proposition}[section]
\newtheorem{lemma}[proposition]{Lemma}
\newtheorem{theorem}[proposition]{Theorem}
\newtheorem{corollary}[proposition]{Corollary}
\theoremstyle{definition}
\newtheorem{definition}[proposition]{Definition}
\newcommand{\set}[1]{\left\lbrace #1 \right\rbrace}
\newcommand{\norm}[1]{\left\Vert #1 \right\Vert}
\begin{document}
\title{Linear port-Hamiltonian systems\newline are generically controllable\hspace*{25mm}}
\author{Jonas Kirchhoff
\thanks{ }
\thanks{The author is with the Institut für Mathematik, Technische Universität Ilmenau, 98693 Ilmenau, Germany (e-mail: jonas.kirchhoff@tu-ilmenau.de)}}

\maketitle

\begin{abstract}
The new concept of relative generic subsets is introduced. It is shown that the set of controllable linear finite-dimensional port-Hamiltonian systems is a relative generic subset of the set of all linear finite-dimensional port-Hamiltonian systems. This implies that a random, continuously distributed port-Hamiltonian system is almost surely controllable.
\end{abstract}

\begin{IEEEkeywords}
controllability, linear systems, port-Hamiltonian systems
%Enter key words or phrases in alphabetical 
%order, separated by commas. For a list of suggested keywords, send a blank 
%e-mail to keywords@ieee.org or visit \underline
%{http://www.ieee.org/organizations/pubs/ani\_prod/keywrd98.txt}
\end{IEEEkeywords}

\section{Introduction}
\label{sec:introduction}
\noindent
Let~$\mathbb{N}^*$ denote the set of positive integers and~$\mathbb{F}$ be either the field of real or complex numbers.

We consider linear finite-dimensional port-Hamiltonian system in form of an  ordinary differential equation
\begin{align}\label{eq:PHS}
\dot{x} = JHx + Bu
\end{align}
where~$(J,H,B)$ belongs to
\[
\mathcal{PH}_{n,m} := 
\left\{ (J,H,B) \left|
\begin{array}{l}
J=-J^* \in \mathbb{F}^{n\times n}, \ B\in\mathbb{F}^{n\times m},\\
H^* = H \in \mathbb{F}^{n\times n} \ \text{positive definite}
\end{array}
\right.
\hspace*{-2mm}
\right\}.
%  \in\mathcal{PHT}_{n,m}\,\big\vert\,H>0}.
\]
%$J,H\in\mathbb{F}^{n\times n}$ with~$J^* = -J$ and~$H^* = H$,~$B\in\mathbb{F}^{n\times m}$ and locally integrable control~$u\in L^1_{\text{loc}}([0,\infty),\mathbb{R}^m)$ \cite[Definition 2.3.2]{JacobZwart12}.

In the very special case of  ordinary differential systems
\begin{align}\label{eq:ODE}
\dot{x} = Ax + Bu,
\end{align}
%with arbitrary~$(A,B)\in\mathbb{F}^{n\times n}\times \mathbb{F}^{n\times m}$,
 Wonham proved~\cite[Thm.~1.3]{Wonh85} that the set of 
 controllable matrix pairs~$(A,B)\in\mathbb{F}^{n\times n}\times \mathbb{F}^{n\times m}$  is a generic set. 
A generic set is defined  as follows: 
 
 \begin{definition}\label{def:gen}\cite[p.\,28]{Wonh85}
A set~$\mathbb{V}\subseteq\mathbb{F}^N$ is called an \textit{algebraic variety}, if it is the locus of finitely many polynomials in~$N$ indeterminants, i.e.
\begin{align*}
\exists p_1,\ldots,p_{k}\in\mathbb{F}[x_1,\ldots,x_N]: \mathbb{V} = \bigcap_{i = 1}^k p_i^{-1}(\set{0}).
\end{align*}
An algebraic variety~$\mathbb{V}$ is called \textit{proper}, if~$\mathbb{V}\neq\mathbb{F}^N$. We denote the set of algebraic varieties by~$\mathcal{V}_N(\mathbb{F})$ and the set of proper algebraic varieties by~$\mathcal{V}_N^{\text{prop}}(\mathbb{F})$.

A set~$S\subseteq\mathbb{F}^N$ is called \textit{generic}, if there is some proper algebraic variety~$\mathbb{V}$ so that~$S^c\subseteq\mathbb{V}$.
\hfill~$\diamond$
\end{definition}
 
 A first step to generalize generic sets to port-Hamiltonian systems
 could be to consider the  set
 \begin{align*}
\mathcal{F}_{n,m} := \mathbb{F}^{n\times n}\times \mathbb{F}^{n\times n}\times\mathbb{F}^{n\times m}.
\end{align*}
However, 
this set does  not obey the special structure of the
matrices~$(J,H,B) \in \mathcal{PH}_{n,m}$.
Instead, one could  consider~$\mathcal{PH}_{n,m}$ directly,
but~$\mathcal{PH}_{n,m}$ is not a vector space, 
it  is the interior of a convex cone. Thus, in this  case  the concept of genericity 
is not adequate.
To  this  end, I introduce the novel~-- to the best of
my knowledge~-- concept  of \textit{relative generic} sets.
 
 \begin{definition}\label{def:rel_gen}
A set~$S\subseteq\mathbb{F}^N$ is called \textit{relative generic} in~$V\subseteq\mathbb{F}^N$, if
\begin{align*}
\exists\mathbb{V}\in\mathcal{V}_N^{\text{prop}}:\quad V\cap S^c\subseteq\mathbb{V}\quad\wedge\quad V\cap\mathbb{V}^c~\text{is~dense~in}~V.
\end{align*}
\end{definition}
Some comments on this definition are warranted:
Genericity of  a set~$S\subseteq\mathbb{F}^N$
does  not say anything about the properties of~$S$ relative to a set~$V\subseteq\mathbb{F}^N$, except that~${V\cap S^c}\subseteq\mathbb{V}$ for some proper algebraic variety. Since~$V$ and~$\mathbb{V}$ may coincide (or even~$V$ and~$S^c$), this does {not} provide any information about the properties of~$S$ relative to~$V$. Also, if we do not {require} density of~$V\cap\mathbb{V}$ in~$V$, then any set~$S\subseteq\mathbb{F}^N$ is relative generic in any proper algebraic variety~$\mathbb{V}\in\mathcal{V}_N(\mathbb{F})$ and hence we would not gain information for these sets. Note that any proper algebraic variety~$\mathbb{V}$ is a Lebesgue nullset (see\cite[p.\,240]{Fede69}) and hence~$\mathbb{V}^c$ is dense. Thus any generic set~$S$ is relative generic in~$\mathbb{F}^N$ and the concept of relative genericity is a generalization of genericty. 

%\texttt{Diese Motivation sollten Sie nochmal überarbeiten und auch
%die zweite Bedingung~$V\cap\mathbb{V}^c~\text{is~dense~in}~V$ erläutern.
%Ich habe auch  eine weitere Proposition zugefügt. Schließlich führen
%Sie ein neues Konzept ein, dann ist ein wenig `song and dance'
%angebracht. Fallen Ihnen noch weiter Eigenschaften ein,
%die man erwähnen könnte? }

An interesting consequence of  relative genericity is the following.

 \begin{proposition}\label{Prop:dense}
If a  set~$S\subseteq\mathbb{F}^N$ is   relative generic  in~$V\subseteq\mathbb{F}^N$, 
then
$V\cap S^c$ is nowhere dense in~$V$, {i.e. its closure with respect to the relative topology in $V$ has an empty interior}.
The converse is, in general, not true.
\end{proposition}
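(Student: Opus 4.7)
The plan is to split the proof into two parts: first derive nowhere-denseness from the two clauses of Definition~\ref{def:rel_gen} by a short contrapositive argument, and then refute the converse with an explicit one-dimensional counterexample.

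For the forward direction, I would fix a witness $\mathbb{V}\in\mathcal{V}_N^{\text{prop}}$ for $S$ being relative generic in $V$, so that $V\cap S^c\subseteq V\cap\mathbb{V}$ and $V\cap\mathbb{V}^c$ is dense in $V$. Since any algebraic variety is Euclidean-closed, $V\cap\mathbb{V}$ is closed in the relative topology of $V$; hence the relative closure of $V\cap S^c$ in $V$ is still contained in $V\cap\mathbb{V}$. If this relative closure had nonempty relative interior, it would contain a nonempty relatively open subset $U\subseteq V$, and then $U\cap(V\cap\mathbb{V}^c)=\emptyset$, contradicting the density assumption. Hence the relative interior is empty and $V\cap S^c$ is nowhere dense in~$V$.

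For the converse, my plan is to exhibit a nowhere dense set whose complement cannot be ``caught'' by any proper algebraic variety. The natural setting is $N=1$ and $V=\mathbb{F}$, since every proper algebraic variety in~$\mathbb{F}$ is the finite zero set of a nonzero univariate polynomial. Taking $S^c:=\set{0}\cup\set{1/n \mid n\in\mathbb{N}^*}$ yields a closed, countably infinite set with empty interior, hence nowhere dense in~$V$; however, being infinite, it cannot be contained in any proper algebraic variety of~$\mathbb{F}$, so~$S$ fails to be relative generic in~$V$. I do not anticipate any serious obstacle; the only point requiring care is carefully distinguishing the Euclidean topology on~$\mathbb{F}^N$ from the relative topology on~$V$, since the statement of the proposition explicitly refers to the latter.
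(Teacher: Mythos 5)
Your forward direction is essentially the paper's argument: both pass to the relatively closed superset $V\cap\mathbb{V}$ and use density of $V\cap\mathbb{V}^c$ to rule out a nonempty relative interior. That part is correct and needs no further comment.

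Your counterexample to the converse is correct but genuinely different from the paper's. The paper works in $V=\mathbb{R}_{>0}$ and builds $S$ as a union of intervals of radius $1/i^2$ around an enumeration of the positive rationals: $S$ is open and dense (so $V\cap S^c$ is nowhere dense), yet $\lambda(S)\leq\pi^2/3<\infty$ while $V$ has infinite measure, so $V\cap S^c$ has positive measure and cannot sit inside a proper algebraic variety, all of which are Lebesgue nullsets. You instead take $V=\mathbb{F}$ and $S^c=\set{0}\cup\set{1/n\mid n\in\mathbb{N}^*}$, a closed countable set with empty interior, and observe that any proper algebraic variety in $\mathbb{F}^1$ is contained in the zero set of a nonzero univariate polynomial and is therefore finite, so it cannot contain your infinite $S^c$. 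Your route is more elementary (no measure theory, only the fundamental theorem of algebra's finiteness of roots), works uniformly for $\mathbb{F}=\mathbb{R}$ and $\mathbb{F}=\mathbb{C}$, and the obstruction is cardinality rather than measure; its one limitation is that the finiteness of proper varieties is special to $N=1$, whereas the paper's nullset argument is the one that generalizes to higher $N$, and the paper's choice of a proper subset $V=\mathbb{R}_{>0}$ exercises the genuinely relative situation rather than the case $V=\mathbb{F}^N$ where relative genericity collapses to ordinary genericity. One small imprecision to fix: a proper variety is an intersection of zero sets of finitely many polynomials, at least one of which is nonzero, so it is \emph{contained in} (not necessarily equal to) a finite zero set; this does not affect your conclusion.
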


\begin{proof}
By Definition~\ref{def:rel_gen},~$V\cap S^c\subseteq V\cap \mathbb{V}$ for some proper algebraic variety~$\mathbb{V}.$ Hence the closure of~$V\cap S^c$ with respect to the relative topology in~$V$ is also contained in {closed set}~$V\cap \mathbb{V}$. Since Definition~\ref{def:gen} requires density of~$V\cap\mathbb{V}^c$, we conclude that~$V\cap\mathbb{V}$ does not contain any inner point and is therefore nowhere dense. The fact that any subset of a nowhere dense set is nowhere dense proves that~$V\cap S^c$ is nowhere dense in~$V$.

We give an example to prove that the converse does, in general, not hold. Let~$V = \mathbb{R}_{>0}$ and~$N = 1.$ Let 
\begin{align*}
\varphi: \mathbb{N}^*\to V\cap\mathbb{Q}
\end{align*} 
be {surjective and put
\begin{align*}
S := \bigcup_{i = 1}^{\infty}\left(\varphi(i)-\frac{1}{i^2},\varphi(i)+\frac{1}{i^2}\right)\cap V.
\end{align*}}
Since~$\mathbb{Q}$ is dense,~$S$ is an open and dense set. Hence~$V\cap S^c$ is a nowhere dense subset of~$V$. {The Lebesgue-measure of $S$ is
\begin{align*}
\lambda(S)\leq \sum_{i = 1}^\infty \frac{2}{i^2} = \frac{\pi^2}{3}
\end{align*}
and thus finite. Since any proper algebraic variety is a Lebesgue nullset and $V$ has infinite Lebesgue measure, we find that~$V\cap S^c$ cannot be contained in some proper algebraic variety.}
\end{proof}

This proves that the topological properties of relative generic sets in~$V$ with respect to the relative topology in~$V$ and generic sets with respect to the Euclidean topology in~$\mathbb{F}$ are similiar. This is a mild justification of the concept of relative generic sets.

\section{Main results}
\noindent
We are now  ready to  state the main results of  the present note.

\begin{theorem}\label{Th:PHS_Rel_gen_cont}
The set of controllable port-Hamiltonian systems
\begin{align*}
\mathcal{PH}_{n,m}^{\text{cont}} := \set{(J,H,B)\in\mathcal{PH}_{n,m}\,\big\vert\,~\eqref{eq:PHS}~\text{is~controllable}}
\end{align*}
is relative generic in~$\mathcal{PH}_{n,m}$, where~$n,m\in  \mathbb{N}^*$.
\end{theorem}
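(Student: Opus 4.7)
\noindent The plan is to translate controllability into the non-vanishing of a finite family of polynomials, produce one explicit controllable port-Hamiltonian triple, and then exploit the convexity of $\mathcal{PH}_{n,m}$ to push this single witness to a dense subset. First I note that by the Kalman rank criterion $(J,H,B)$ gives a controllable system~\eqref{eq:PHS} exactly when $\mathcal{K}(J,H,B) := [B,\, JHB,\, \ldots,\, (JH)^{n-1}B] \in \mathbb{F}^{n \times nm}$ has rank $n$, i.e.\ some $n \times n$ minor of $\mathcal{K}$ is nonzero. Each such minor is a polynomial in the $N := 2n^2 + nm$ entries of $(J,H,B)$; let $p_1, \ldots, p_k$ enumerate them and set $\mathbb{V} := \bigcap_{i=1}^k p_i^{-1}(\set{0}) \in \mathcal{V}_N(\mathbb{F})$. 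Tautologically $\mathcal{PH}_{n,m} \cap (\mathcal{PH}_{n,m}^{\text{cont}})^c \subseteq \mathbb{V}$, so the remaining work is to verify that $\mathbb{V}$ is proper and that $\mathcal{PH}_{n,m}\cap\mathbb{V}^c$ is dense in $\mathcal{PH}_{n,m}$.

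\medskip

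\noindent Both remaining tasks hinge on a single explicit witness. I would take $H_1 := I_n$, let $B_1 \in \mathbb{F}^{n \times m}$ have first column $e_1$ and all other columns zero, and let $J_1 \in \mathbb{F}^{n \times n}$ be the real tridiagonal matrix with $+1$ on the superdiagonal and $-1$ on the subdiagonal. Then $J_1^* = -J_1$ and $H_1 = H_1^* > 0$, so $(J_1, H_1, B_1) \in \mathcal{PH}_{n,m}$. A short induction on $j$ gives $J_1^j e_1 \in \operatorname{span}\set{e_1, \ldots, e_{j+1}}$ with nonzero coefficient at $e_{j+1}$, whence $[e_1, J_1 e_1, \ldots, J_1^{n-1} e_1]$ is triangular with nonzero diagonal and thus invertible, which makes $(J_1, H_1, B_1)$ controllable. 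Consequently some $p_i$ does not vanish at $(J_1, H_1, B_1)$, so $\mathbb{V}$ is a proper algebraic variety in $\mathbb{F}^N$.

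\medskip

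\noindent For the density claim I would use that $\mathcal{PH}_{n,m}$ is convex: $\set{J : J^* = -J}$ is a real subspace, the set of positive-definite Hermitian matrices is a convex cone, and $B$ is unconstrained. Given an arbitrary $(J_0, H_0, B_0) \in \mathcal{PH}_{n,m}$, form the segment $(J_t, H_t, B_t) := (1-t)(J_0, H_0, B_0) + t(J_1, H_1, B_1)$, $t \in [0,1]$, which stays inside $\mathcal{PH}_{n,m}$. Fixing an $i$ with $p_i(J_1, H_1, B_1) \neq 0$, the map $t \mapsto p_i(J_t, H_t, B_t)$ is a polynomial in $t$ with coefficients in $\mathbb{F}$ that does not vanish at $t = 1$, hence has only finitely many real zeros; letting $t \to 0^+$ through the complement of those zeros yields approximating triples in $\mathcal{PH}_{n,m}\cap\mathbb{V}^c$, which establishes density and closes the argument per Definition~\ref{def:rel_gen}.

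\medskip

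\noindent The only substantive step I anticipate is the construction and controllability verification of the port-Hamiltonian witness $(J_1, H_1, B_1)$; everything after that is the standard one-parameter polynomial trick (essentially the device underlying Wonham's original proof for~\eqref{eq:ODE}) combined with the convexity of the port-Hamiltonian cone.
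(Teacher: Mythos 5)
Your proposal is correct, and the first half (Kalman minors, the tridiagonal skew-symmetric witness with $H=I_n$ and $B=[e_1,0]$) coincides with the paper's Step~1. Where you genuinely diverge is in how density of $\mathcal{PH}_{n,m}\cap\mathbb{V}^c$ in $\mathcal{PH}_{n,m}$ is obtained. The paper never argues in the ambient space $\mathcal{F}_{n,m}\cong\mathbb{F}^{2n^2+nm}$ directly, because there $\mathcal{PH}_{n,m}$ has empty interior (it sits inside the proper subspace $\mathcal{PHT}_{n,m}$); instead it parametrizes $\mathcal{PHT}_{n,m}$ by $\mathbb{F}^{n^2+nm}$, observes that $\mathcal{PH}_{n,m}$ is \emph{open} in those coordinates, invokes a general lemma (open set intersected with the complement of a nowhere dense proper variety is dense), and then transports the conclusion back through a coordinate-change lemma for relative genericity. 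You bypass both lemmata by exploiting that $\mathcal{PH}_{n,m}$ is convex (in fact star-shaped about your witness) and restricting a nonvanishing minor to the segment joining an arbitrary point to the witness, where it becomes a one-variable polynomial with finitely many zeros; this is the classical Wonham-style one-parameter trick and it yields density directly in the relative topology of $\mathcal{PH}_{n,m}\subseteq\mathcal{F}_{n,m}$, which is exactly what Definition~\ref{def:rel_gen} asks for. Your route is shorter and more self-contained, and it applies to any set that is star-shaped about a single controllable witness regardless of whether it is relatively open in some subspace; the paper's route is heavier here but produces two reusable general lemmata (relative genericity from genericity on open subsets of a subspace, and invariance under linear isomorphisms) that are of independent interest. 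One cosmetic caveat: your $N=2n^2+nm$ counts matrix entries of $\mathcal{F}_{n,m}$ and differs from the paper's $N=n^2+nm$ (the dimension of $\mathcal{PHT}_{n,m}$); this is consistent with the theorem as stated, since the paper itself ultimately reads the conclusion in $\mathcal{F}_{n,m}$, but you should not reuse the symbol $N$ from display~\eqref{eq:PHtype} for it.
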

The proof is given in  Section~\ref{Sec:Proof}.
\\

As  a consequence of the above result
we find that 
a random, continuously distributed port-Hamiltonian system is almost surely controllable. To this end, define
\begin{align*}
\mathcal{PHT}_{n,m} := \set{(J,H,B)\in\mathcal{F}\,\big\vert\,J^* = -J, H^* = H}
\supset \mathcal{PH}_{n,m} 
\end{align*}
and note that for  any~$(J,H,B)\in\mathcal{PHT}_{n,m}$, 
~$J\in\mathbb{F}^{n\times n}$ is skew-adjoint  and~$H\in\mathbb{F}^{n\times n}$ is self-adjoint or,
 in formal terms,
\begin{align*}
\forall i,j\in\underline{n}: J_{i,j} = -\overline{J_{j,i}}\qquad\text{and}\qquad H_{i,j} = \overline{H_{j,i}},
\end{align*}
where~$\underline{n} := \set{1,\ldots,n}$, and thus~$J$ is uniquely determined by its upper right triangle,~$(J_{i,j})_{(i,j)\in\underline{n}^2,i<j}$, and~$H$ is uniquely determined by the diagonal and its upper right triangle,~$(H_{i,j})_{(i,j)\in\underline{n}^2,i\leq j}.$ This leads to the vector space isomorphism
\begin{align}\label{eq:PHtype}
T \colon \mathcal{PHT}_{n,m}  \to  \mathbb{F}^N,
\end{align}
where
$N := \frac{n(n-1)}{2} + \frac{n(n+1)}{2}+nm$.

%on a continuously distributed random variable~$(A,B)$ be a with values in~$\mathbb{F}^{n\times n}\times\mathbb{F}^{n\times m}$. 
%As mentioned in the  Introduction,
%the set of controllable 
%matrix pairs in~$\mathbb{F}^{n\times n}\times \mathbb{F}^{n\times m}$  is generic. Thus the probability that the random variable~$(A,B)$ describes a controllable ordinary differential system of the form~\eqref{eq:ODE} is~1. Theorem~\ref{Th:PHS_Rel_gen_cont} allows to show similiar result for port-Hamiltonian systems.

We are now ready to formulate  the  corollary.

\begin{corollary}~\label{cor:random}
Let~$T$ denote the vector space isomorphism~\eqref{eq:PHtype}.
Equip~$\mathcal{F}_{n,m}$ with its Borel~$\sigma$-algebra~$\mathfrak{B}$. 
Let~$\mathbb{P}$ be a probability measure on~$\mathcal{PHT}_{n,m}$ 
so that~$\mathbb{P}$ is absolutely continuous with respect to the image measure 
\begin{align*}
\mu: \mathfrak{B}\to\mathbb{R}_{\geq 0}, \quad B\mapsto \lambda(T(B\cap \mathcal{PHT}_{n,m}))
\end{align*}
of the Lebesgue measure~$\lambda$ on~$\mathbb{F}^N$ and~$\mathbb{P}(\mathcal{PH}_{n,m}) = 1.$ Let~$(J,H,B)\sim\mathbb{P}$ be a random variable. Then
\begin{align*}
\mathbb{P}\big((J,H,B)\in\mathcal{PH}_{n,m}^\text{cont}\big) = 1.
\end{align*}
\end{corollary}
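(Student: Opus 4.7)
The plan is to reduce the corollary to a Lebesgue null set argument via the isomorphism $T$, using Theorem~\ref{Th:PHS_Rel_gen_cont} as the key input. Throughout, I will silently identify $\mathcal{PHT}_{n,m}$ with $\mathbb{F}^N$ via $T$, since this is the ambient space in which relative genericity was formulated.

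First I would unpack Theorem~\ref{Th:PHS_Rel_gen_cont}: interpreted in $\mathbb{F}^N$ via $T$, it produces a proper algebraic variety $\mathbb{V}\in\mathcal{V}_N^{\text{prop}}$ such that
\[
T\bigl(\mathcal{PH}_{n,m}\setminus\mathcal{PH}_{n,m}^{\text{cont}}\bigr)\subseteq\mathbb{V}.
\]
The density clause of Definition~\ref{def:rel_gen} is not needed here; only the containment matters. Since $\mathbb{V}$ is a proper algebraic variety in $\mathbb{F}^N$, it is a Lebesgue nullset by the same fact cited in the remarks after Definition~\ref{def:rel_gen} (Federer), i.e.\ $\lambda(\mathbb{V}) = 0$.

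Next I would translate this to a statement about $\mu$. By definition of the image measure,
\[
\mu\bigl(\mathcal{PH}_{n,m}\setminus\mathcal{PH}_{n,m}^{\text{cont}}\bigr)
=\lambda\Bigl(T\bigl((\mathcal{PH}_{n,m}\setminus\mathcal{PH}_{n,m}^{\text{cont}})\cap\mathcal{PHT}_{n,m}\bigr)\Bigr)
\leq \lambda(\mathbb{V}) = 0,
\]
using $\mathcal{PH}_{n,m}\subseteq\mathcal{PHT}_{n,m}$ and monotonicity of $\lambda$. Since $\mathbb{P}\ll\mu$, this gives $\mathbb{P}(\mathcal{PH}_{n,m}\setminus\mathcal{PH}_{n,m}^{\text{cont}})=0$. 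Combining with $\mathbb{P}(\mathcal{PH}_{n,m})=1$ then yields
\[
\mathbb{P}\bigl((J,H,B)\in\mathcal{PH}_{n,m}^{\text{cont}}\bigr)
= \mathbb{P}(\mathcal{PH}_{n,m}) - \mathbb{P}(\mathcal{PH}_{n,m}\setminus\mathcal{PH}_{n,m}^{\text{cont}}) = 1.
\]

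The only subtlety I would want to verify carefully is measurability: one must check that $\mathcal{PH}_{n,m}\setminus\mathcal{PH}_{n,m}^{\text{cont}}$ is a Borel set so that the above probabilities and $\mu$-values are defined. This is routine, because positive definiteness of $H$ is an open condition (defined by strict inequalities on leading principal minors) and the Kalman controllability condition on $(JH,B)$ is a polynomial condition on the entries; so both $\mathcal{PH}_{n,m}$ and its subset $\mathcal{PH}_{n,m}^{\text{cont}}$, viewed in $\mathcal{F}_{n,m}$ or equivalently in $\mathbb{F}^N$ via $T$, are Borel. Apart from this bookkeeping, there is no real obstacle: the entire content sits in Theorem~\ref{Th:PHS_Rel_gen_cont} and the fact that proper algebraic varieties are Lebesgue nullsets.
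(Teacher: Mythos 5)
Your proposal is correct and follows essentially the same route as the paper's (much terser) proof: pull the relative-genericity statement of Theorem~\ref{Th:PHS_Rel_gen_cont} back to $\mathbb{F}^N$ via $T$, use that proper algebraic varieties are Lebesgue nullsets, and transfer the nullset through $\mu$ and absolute continuity of $\mathbb{P}$. Your expansion — including the observation that only the containment clause of Definition~\ref{def:rel_gen} is needed and the measurability bookkeeping — is a faithful filling-in of the details the paper leaves implicit.
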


\begin{proof}
Since~$\mathcal{PH}_{n,m}$ is a relative open subset of~$\mathcal{PHT}_{n,m}$, a probability measure~$\mathbb{P}$  exists.
Since algebraic varieties are Lebesgue nullsets, the  claim  is a direct consequence of Theorem~\ref{Th:PHS_Rel_gen_cont}.
\end{proof}

%_____________________________________________________________________
\section{Proof of Theorem~\ref{Th:PHS_Rel_gen_cont}}
\label{Sec:Proof}
\noindent
The proof of Theorem~\ref{Th:PHS_Rel_gen_cont}
is based on the following two lemmata on algebraic geometry
properties of relative generic sets.

\begin{lemma}\label{Lem:rel-gen}
Let~$S,V\subseteq\mathbb{F}^N$ be arbitrary sets. If~$V$ is open and~$S\cap V = S'\cap V$ for some generic set~$S'$, then~$S$ is relative generic in~$V$.
\end{lemma}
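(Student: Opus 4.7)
The plan is to reuse the very same proper algebraic variety $\mathbb{V}$ that witnesses genericity of $S'$ as the witness for relative genericity of $S$ in $V$. By Definition~\ref{def:gen}, there exists $\mathbb{V}\in\mathcal{V}_N^{\text{prop}}$ with $(S')^c\subseteq\mathbb{V}$. I would then check the two clauses of Definition~\ref{def:rel_gen} for this choice.

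For the first clause, the hypothesis $S\cap V = S'\cap V$ gives, by taking complements in $\mathbb{F}^N$ and intersecting with $V$, the identity $V\cap S^c = V\cap (S')^c$. Combined with $(S')^c\subseteq\mathbb{V}$, this yields
\begin{align*}
V\cap S^c \;=\; V\cap (S')^c \;\subseteq\; V\cap\mathbb{V} \;\subseteq\;\mathbb{V},
\end{align*}
which is the containment required.

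For the second clause, I would invoke the standard fact (cited in the text after Definition~\ref{def:rel_gen}) that every proper algebraic variety over $\mathbb{R}$ or $\mathbb{C}$ is a Lebesgue nullset, so $\mathbb{V}^c$ is open and dense in $\mathbb{F}^N$. Density in $V$ then follows from openness of $V$: any nonempty relatively open subset $U\subseteq V$ is also open in $\mathbb{F}^N$, and openness together with density of $\mathbb{V}^c$ in $\mathbb{F}^N$ forces $U\cap\mathbb{V}^c\neq\emptyset$, so $V\cap\mathbb{V}^c$ meets every relatively open subset of $V$ and is therefore dense in $V$. (The case $V=\emptyset$ is vacuous.)

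There is no serious obstacle here; the lemma is essentially an unpacking of the definitions, with the only nontrivial input being that proper algebraic varieties have open dense complements, and the role of the openness hypothesis on $V$ is precisely to transfer this density from the ambient $\mathbb{F}^N$ to the relative topology on $V$.
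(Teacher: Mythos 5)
Your proof is correct and follows essentially the same route as the paper: take the variety $\mathbb{V}$ witnessing genericity of $S'$, note $V\cap S^c=V\cap(S')^c\subseteq\mathbb{V}$, and use that proper algebraic varieties are closed Lebesgue nullsets (hence nowhere dense) together with openness of $V$ to get density of $V\cap\mathbb{V}^c$ in $V$. The paper phrases the density step as a proof by contradiction, but the substance is identical.
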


%\texttt{Wo kommt Lemma~\ref{Lem:rel-gen}  zum Einsatz?}

\begin{proof}
If~$S\cap V = S'\cap V$, then by genericity of~$S'$ there exists~$\mathbb{V}\in\mathcal{V}_N^{\text{prop}}(\mathbb{F})$ such that
\begin{align*}
V\cap S^c = V\cap (S')^c\subseteq\mathbb{V}\in\mathcal{V}_N^{\text{prop}}(\mathbb{F}).
\end{align*} 
It remains to prove that~$V\cap \mathbb{V}^c$ is dense in~$V$. Seeking a contradiction suppose there exists some~$\hat{x}\in V$ and~$\varepsilon >0$ so that
\begin{align*}
\set{x\in V\,\big\vert\,\norm{x-\hat{x}}_2<\varepsilon}\cap\mathbb{V}^c = \emptyset.
\end{align*}
Since~$V$ is open, we can choose without loss of generality~$\varepsilon>0$ such that 
\begin{align*}
\set{x\in  V\,\big\vert\,\norm{x-\hat{x}}_2<\varepsilon} 
= \set{x\in\mathbb{F}^N\,\big\vert\,\norm{x-\hat{x}}_2<\varepsilon}.
\end{align*}
%\texttt{(Ich habe  oben~$x\in \mathbb{V}$  zu~$x\in V$  gemacht;
%und jetzt folgend
%`Hence~$ x$ is an inner point'
%zu``Hence~$\hat x$ is an inner point''. Korrekt?)}
Hence~$\hat x$ is an inner point of~$\mathbb{V}$, which contradicts the fact that any proper algebraic variety is a closed Lebesgue nullset and hence nowhere dense (see\cite[p.\,240]{Fede69}). This proves the lemma.
\end{proof}

%\texttt{Im folgenden Lemma und im Beweis haben Sie  an diversen Stellen~$\mathbb{R}$
%geschrieben, das soll~$\mathbb{F}$ sein, oder?}

\begin{lemma}~\label{lem:simplification}
Let~$U\subseteq\mathbb{F}^N$ be a subspace,
$S,V\subseteq U$,
and 
$T:U\to\mathbb{F}^M$  a vector space isomorphism
for~$M=\dim U$.
 Then~$S$ is relative generic in~$V$ if, and only if,~$TU$ is relative generic in~$TV$. 
\end{lemma}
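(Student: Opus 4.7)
The guiding observation is that a linear isomorphism between finite-dimensional $\mathbb{F}$-vector spaces is simultaneously a homeomorphism and a polynomial map (in both directions), so it should transport both the algebraic-variety structure of Definition~\ref{def:gen} and the topological density condition of Definition~\ref{def:rel_gen}. The one subtlety is that proper algebraic varieties live in the ambient spaces $\mathbb{F}^N$ and $\mathbb{F}^M$, not in $U$ itself, so some bookkeeping is required to extend $T$ and $T^{-1}$ to linear maps between the full ambient spaces (reading the ``$TU$'' in the statement as a typo for ``$TS$'').

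For the ``only if'' direction, let $\mathbb{V}\in\mathcal{V}_N^{\text{prop}}(\mathbb{F})$ witness the relative genericity of $S$ in $V$. Viewing $T^{-1}\colon \mathbb{F}^M \to U\subseteq \mathbb{F}^N$ as a linear map into $\mathbb{F}^N$ whose components are linear polynomials, I would set
\begin{align*}
\mathbb{W} := \{y\in\mathbb{F}^M \mid T^{-1}(y)\in\mathbb{V}\},
\end{align*}
which is an algebraic variety in $\mathbb{F}^M$ via pullback of the defining polynomials of $\mathbb{V}$. Properness of $\mathbb{W}$ follows from the density hypothesis: if $\mathbb{W}=\mathbb{F}^M$ then $U\subseteq\mathbb{V}$, hence $V\cap\mathbb{V}^c=\emptyset$, contradicting density unless $V=\emptyset$ (a trivial case). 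Since $T$ is a bijection on $U$ and $S,V\subseteq U$, one checks $TV\cap(TS)^c = T(V\cap S^c)\subseteq \mathbb{W}$ and $TV\cap\mathbb{W}^c = T(V\cap\mathbb{V}^c)$; density of the latter in $TV$ then follows because $T\colon U\to\mathbb{F}^M$ is a homeomorphism between the subspace topology on $U$ and the standard topology on $\mathbb{F}^M$.

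For the converse, I would first extend $T$ to an $\mathbb{F}^N$-domain: fix any complement of $U$ in $\mathbb{F}^N$ and let $P\colon\mathbb{F}^N\to U$ denote the associated linear projection, so that $\tilde T := T\circ P\colon \mathbb{F}^N\to\mathbb{F}^M$ is a linear surjection with polynomial components satisfying $\tilde T|_U = T$. Given $\mathbb{W}\in\mathcal{V}_M^{\text{prop}}(\mathbb{F})$ witnessing relative genericity of $TS$ in $TV$, I would set $\mathbb{V} := \tilde T^{-1}(\mathbb{W})$; this is an algebraic variety in $\mathbb{F}^N$ by pullback, and surjectivity of $\tilde T$ ensures properness ($\mathbb{V}=\mathbb{F}^N$ would give $\mathbb{W}=\mathbb{F}^M$). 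The containment $V\cap S^c\subseteq\mathbb{V}$ is immediate from $\tilde T|_U = T$ together with the hypothesis on $\mathbb{W}$, and density of $V\cap\mathbb{V}^c$ in $V$ is obtained from density of $TV\cap\mathbb{W}^c$ in $TV$ by applying the continuous inverse $T^{-1}$.

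I expect the main obstacle to be the bookkeeping around the two different ambient spaces: containment and properness of the candidate varieties must be verified in $\mathbb{F}^N$ and $\mathbb{F}^M$ respectively, while the density clause in Definition~\ref{def:rel_gen} only concerns the subspace topologies on $V$ and $TV$. Extending $T$ by a projection $P$ in the converse direction is precisely the device needed to reconcile these two perspectives; the remainder of the argument is routine diagram chasing through the bijection $T\colon U\to\mathbb{F}^M$.
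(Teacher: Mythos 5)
Your proposal is correct and follows essentially the same route as the paper: in the forward direction your $\mathbb{W}$ is exactly the paper's $T(U\cap\mathbb{V})$, cut out by the pulled-back polynomials $p_i\circ T^{-1}$, and in the converse direction your extension $\tilde T=T\circ P$ plays the same role as the paper's ambient extension $T'$ of $T$ to $\mathbb{F}^N$, with properness in both cases obtained from surjectivity onto $U$ (resp.\ $\mathbb{F}^M$) and the density clause. You also correctly read the statement's ``$TU$'' as a typo for ``$TS$'', which matches the paper's intent.
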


\begin{proof}
If~$V = \emptyset$, then any set~$S$ is relative generic in~$V$ and nothing has to be shown. If~$S = \emptyset$, then~$S$ is relative generic in~$V$ if, and only if,~$V = \emptyset$ and we are in the first case. Hence let~$S\neq\emptyset$ and~$V\neq\emptyset$.

\noindent
$\implies$ Let~$S$ be relative generic in~$V$. Then there exists some proper algebraic variety~$\mathbb{V}\subseteq\mathbb{F}^N$ so that~$V\cap S^c\subseteq\mathbb{V}$ and~$V\cap \mathbb{V}^c$ is dense in~$V$. Since isomorphisms on finite dimensional vector spaces are {continuous} this implies {that}~$TV\cap (TS)^c\subseteq T(\mathbb{V}\cap U)$ and~$TV\cap T(\mathbb{V}^c\cap U)$ is dense in~$TV$. Since the intersection of proper algebraic varieties is a proper algebraic variety and proper subspaces are proper algebraic varieties, too, we conclude that~$\mathbb{V}\cap U \in\mathcal{V}_N^{\text{prop}}(\mathbb{F})$. Further we find polynomials~$p_1,\ldots,p_k\in\mathbb{F}[x_1,\ldots,x_N]$ such that~$U\cap\mathbb{V} = \bigcap_{i = 1}^kp_i^{-1}(\set{0})$ and thus
\begin{align*}
T(U\cap\mathbb{V}) = \set{x\in\mathbb{F}^M\,\big\vert\,
\forall i =1,\ldots,k\
%\underline{k}
: p_i(T^{-1}x) = 0}.
\end{align*}
%where~$\underline{k} := \set{1,\ldots,k}$.
Since~$T^{-1}$ can be represented as a matrix in~$\mathbb{F}^{N\times M}$, it follows that~$p_i\circ T^{-1}$ is a polynomial in~$M$ indeterminants. This proves that~$T(U\cap\mathbb{V})$ is an algebraic variety. Since~$TV$ is nonempty and~$TV\cap T(\mathbb{V}^c\cap U)$ is dense in~$TV$,~$T(U\cap V)$ is a proper algebraic variety.

\noindent
$\impliedby$ Let~$TU$ be relative generic in~$TV$. Then there exists some~$\mathbb{V}\in\mathcal{V}_M^{\text{prop}}(\mathbb{F})$ so that the inclusion~$TV\cap TU\subseteq\mathbb{V}$ holds true and~$TV\cap \mathbb{V}^c$ is dense in~$TV$. From standard linear algebra we know that there is a vector space endomorphism~$T':\mathbb{F}^N\to\mathbb{F}^N$ so that~$T'U = \mathbb{F}^M\times\set{0}^{N-M}$ and 
\begin{align*}
\forall u\in U: T'u = (Tu,0_{N-M}).
\end{align*}
By Definition~\ref{def:gen} we find~$p_1,\ldots,p_k\in\mathbb{F}[x_1,\ldots,x_M]$ so that~$\mathbb{V} = \bigcap_{i = 1}^kp_i^{-1}(\set{0}).$~$\mathbb{F}[x_1,\ldots,x_M]$ is embedded into~$\mathbb{F}[x_1,\ldots,x_N]$
 and thus we find
\begin{align*}
(T')^{-1}\mathbb{V}\times\mathbb{F}^{N-M} = \set{x\in \mathbb{F}^N\,\big\vert\,\forall i =1,\ldots,k\ : p_i(T'x) = 0}.
\end{align*}
Since~$T'$ can be represented by a matrix in~$\mathbb{F}^{N\times N},$ it is follows that~$(T')^{-1}\mathbb{V}\times\mathbb{F}^{N-M}\in\mathcal{V}_N(\mathbb{F})$.~$(T')^{-1}\mathbb{V}$ is proper since~$\mathbb{V}$ is proper. Hence
\[
T^{-1}\mathbb{V} 
= \left( (T')^{-1}\mathbb{V}\times\mathbb{F}^{N-M}\right) \cap U
\] 
is also a proper algebraic variety. Since~$T^{-1}$ is a diffeomorphism we find that~$T^{-1}\mathbb{V}^c\cap V$ is dense in~$V$. Finally, the inclusion~$$S^c\cap V\subseteq T^{-1}\mathbb{V}$$ proves the lemma.
\end{proof}

\noindent
\textbf{Proof of Theorem~\ref{Th:PHS_Rel_gen_cont}:}
We proceed in  two steps.
\\
\noindent{\sc Step 1}: \
 We show that the set
\begin{align*}
S := \set{(J,H,B)\in\mathcal{PHT}_{n,m}\,\big\vert\,~\eqref{eq:PHS}~\text{is~controllable}}
\end{align*}
as a subset of~$\mathbb{F}^N$ is the complement of a proper algebraic variety~{$\mathbb{V}\in\mathcal{V}^{\text{prop}}_{N}(\mathbb{F})$}. Write, for~$(J,H,B)\in\mathcal{PHT}_{n,m}$, the Kalman matrix of~$(JH,B)$ as 
\begin{align*}
\mathcal{K}(J,H,B) := [B,JHB,\ldots,(JH)^{n-1}B]\in\mathbb{F}^{n\times nm}.
\end{align*}
Let further be~$\widetilde{M}_1,\ldots,\widetilde{M}_q$ be all minors of order~$n$ {with respect to}~$\mathbb{F}^{n\times nm}$ and put, for all~$i\in\set{1,\ldots,q}$,
\begin{align*}
M_i: \mathcal{PHT}_{n,m}\to\mathbb{F}, 
\quad
(J,H,B)\mapsto\widetilde{M}_i(\mathcal{K}(J,H,B)).
\end{align*}
It is easy to see (e.g.\ by the Leibniz formula) that~$M_1,\ldots,M_q$ {are polynomials in~$N$ indeterminants}. The Kalman rank criterion~\cite[Theorem 3, p.\,89]{Sont98a} yields
\begin{align*}
S^c = \bigcap_{i = 1}^q M_i^{-1}(\set{0}) =: \mathbb{V}\in\mathcal{V}_N(\mathbb{F}).
\end{align*}
Hence it remains to prove that~$\mathbb{V}$ is proper,
i.e.~~$S\neq\emptyset$. For~\mbox{$n = 1$} this is trivial. Let~$n\geq 2$ and choose
\begin{align*}
J = 
\left[\begin{smallmatrix}
0 & -1 \\
1 & \ddots & \ddots\\
 & \ddots & \ddots & -1\\
 & & 1 & 0
\end{smallmatrix}
\right],\quad H = I_n,\quad B = [e_1,0_{n\times(m-1)}],
\end{align*}
where~$e_1$ is the first standard unit vector in~$\mathbb{F}^n$.
Then a simple calculation gives  
\begin{align*}
\text{im}\,\mathcal{K}(J,H,B) = \text{im}\,[e_1,Je_1,\ldots,J^{n-1}e_1] = \mathbb{F}^n
\end{align*}
and hence~$(J,H,B)\in S$.
%
%
%Then we find for the first standard unit vector~$e_1\in\mathbb{F}^n$
%\begin{align*}
%\forall k\in\set{0,\ldots,n-1}: J^ke_1\in\text{span}\set{e_1,\ldots,e_{k+1}}.
%\end{align*}
%With~$B := [e_1,0_{n\times (m-1)}]$ we find that the Kalman matrix of~$JH = J$ and~$B$ has rank~$n$ and thus~$(J,H,B)\in S$.
This proves that~$S$ is the complement of a proper algebraic variety.

\noindent{\sc Step 2}: \
Note that~$\mathcal{PH}_{n,m}$ is open in~$\mathcal{PHT}_{n,m}$ since the spectrum of a quadratic matrix is continuous {with respect to the Hausdorff-distance~(see~\cite[Theorem 3]{BEK90})}. Hence the equality~$\mathcal{PH}_{n,m}^{\text{cont}} = \mathcal{PH}_{n,m}\cap S$ and Lemma~\ref{Lem:rel-gen} imply that~$\mathcal{PH}_{n,m}^{\text{cont}}\subseteq\mathbb{F}^N$ is relative generic in~$\mathcal{PH}_{n,m}\subseteq\mathbb{F}^N.$ Applying Lemma~\ref{lem:simplification} {to an isomorphism from $\mathcal{PHT}_{n,m}$ to $\mathbb{F}^{N}$} we find that~$\mathcal{PH}_{n,m}^{\text{cont}}\subseteq\mathcal{F}_{n,m}$ is relative generic in~$\mathcal{PH}_{n,m}\subseteq\mathcal{F}_{n,m}.$ This completes the proof of the theorem.
\hfill~$\Box$

\section*{Acknowledgment}
\noindent
I thank Professor Achim Ilchmann (Ilmenau) for his valuable advice. Further I thank my fellow student Tobias Bernstein (Ilmenau) for his proofreading.

\end{document}